 \newtheorem{thm}{Theorem}
\begin{document}
\centerline{\Large{\bf A Periodic Solution to Impulsive Logistic}}
\centerline{}
\centerline{\Large{\bf Equation}}
\centerline{}
\centerline{Gyong-Chol Kim, Hyong-Chol O, Sang-Mun Kim and Chol Kim}
 \centerline{}
 \small \centerline{Faculty of Mathematics, \textbf{Kim Il Sung} University, Pyongyang, D.P.R Korea}

\centerline{}
\centerline{}
\begin{abstract}
 In this paper is provided a new representation of periodic solution to the impulsive Logistic equation considered in \cite{zha1}.
\end{abstract}
{\bf Keywords:} Predator-prey system, Stability, Impulsive differential equation, 

\hspace{1.3cm} Impulse \\
{\bf MSC(2010):} Primary 34C25, 34K13, 37C27, 37C55, 39A23; 

\hspace{1.6cm} Secondary 34B37, 34K45
%
%
%%%%%%%%%%%%%%  1 Introduction   %%%%%%%%%%%%%%%%
%
%
\section{Introduction}
There are a lot of works about the solutions and periodic solutions to Logistic equations with initial conditions. 
In \cite{fan} they obtained the solutions and periodic solutions to initial value problem of Logistic equation without impulse action. In \cite{che, zha2} they studied a periodic solution to Logistic equation in the case with a constant breeding 
coefficient ($r$ in Eq. \eqref{eq1} in what follows) and a constant saturation coefficient 
($K$ in Eq. \eqref{eq1} in what follows). \cite{li1}-\cite{yan} provided the similar study.

In \cite{zha1} they studied the following impulsive Logistic equation
\begin{eqnarray}
&& \left\{ \begin{array}{ll}
	x' = r(t) \left[ 1- \frac{x}{K(t)} \right]x, t \neq \tau_k & \qquad \textnormal{(a)}  \\
	\Delta x = -Ex, \quad t=\tau_k (= t_0+k, k=1, 2, \cdots) & \qquad \textnormal{(b)} 
	\end{array} \right. \label{eq1} \\
&& \quad ~ x(t_0) = x_0 \nonumber
\end{eqnarray}
and obtained the representation of solution 
\begin{eqnarray*}
x(t) & = & \left[ \frac{1}{x_0[(1-E)A]^k} \textnormal{exp}\left( -\int_{t_0+k}^t r(\tau)d\tau \right) \right. \\
&& \quad + AB\sum_{j=1}^k \frac{1}{[(1-E)A]^j} \textnormal{exp}\left( \int_{t_0+k}^t r(\tau)d\tau \right) \\
&& \quad \left. + \int_{t_0+k}^t \frac{r(s)}{K(s)} \textnormal{exp}\left( -\int_s^t r(\tau)d\tau \right)ds \right]^{-1},
\end{eqnarray*} 
where $k$ is the number of impulse points in the interval $[t_0, t)$, and the analytical representation of periodic solution 
\begin{equation*}
x_*(t) = [(1-E)A-1] \left[ A \int_t^{t+1} \frac{r(s)}{K(s)} \textnormal{exp}\left( -\int_s^{t+1} r(\tau)d\tau \right)ds\right]^{-1}
\end{equation*}  
with the period 1 when $(1-E)A>1$. Here, $A$ and $B$ are provided as follows:
\begin{eqnarray}
A & = & \textnormal{exp} \int_{t_0+k}^{t_0+k+1} r(\tau)d\tau = \textnormal{exp} \int_0^1 r(\tau)d\tau \nonumber \\
B & = & \int_{t_0+k}^{t_0+k+1} \frac{r(s)}{K(s)} \textnormal{exp} \left( -\int_s^{t_0+k+1} r(\tau)d\tau \right)ds \label{eq2} \\
& = & \int_{t_0}^{t_0+1} \frac{r(s)}{K(s)} \textnormal{exp} \left( -\int_s^{t_0+1} r(\tau)d\tau \right)ds. \nonumber
\end{eqnarray}
But this $x_*(t)$ does not satisfy the impulsive condition (b). In fact
\begin{equation*}
\Delta x_* = x_*\left((t_0+k)^+\right) - x_*\left((t_0+k)^-\right)
\end{equation*}  
and the first term of it is given by
\begin{eqnarray*}
&& x_*\left((t_0+k)^+\right) = \lim_{t \to t_0+k+0} x_*(t) \\
&& \qquad = \lim_{t \to t_0+k+0} [(1-E)A-1] \left[ A \int_t^{t+1} \frac{r(s)}{K(s)} \textnormal{exp}\left( -\int_s^{t+1} r(\tau)d\tau \right)ds\right]^{-1} \\
&& \qquad = [(1-E)A-1] \left[ A  \lim_{t \to t_0+k+0} \int_t^{t+1} \frac{r(s)}{K(s)} \textnormal{exp}\left( -\int_s^{t+1} r(\tau)d\tau \right)ds\right]^{-1}.
\end{eqnarray*}
Here
\begin{eqnarray*}
&& \lim_{t \to t_0+k+0} \int_t^{t+1} \frac{r(s)}{K(s)} \textnormal{exp}\left( -\int_s^{t+1} r(\tau)d\tau \right)ds \\
&& \qquad = \lim_{t \to t_0+k+0} \left[ \int_t^{t_0+k+1} \frac{r(s)}{K(s)} \textnormal{exp}\left( -\int_s^{t+1} r(\tau)d\tau \right)ds \right. \\
&& \qquad \qquad \left. + \int_{t_0+k+1}^{t+1} \frac{r(s)}{K(s)} \textnormal{exp}\left( -\int_s^{t+1} r(\tau)d\tau \right)ds \right] \\
&& \qquad = \lim_{t \to t_0+k+0} \int_t^{t_0+k+1} \frac{r(s)}{K(s)} \textnormal{exp}\left( -\int_s^{t+1} r(\tau)d\tau \right)ds \\
&& \qquad \qquad +  \lim_{t \to t_0+k+0} \int_{t_0+k+1}^{t+1} \frac{r(s)}{K(s)} \textnormal{exp}\left( -\int_s^{t+1} r(\tau)d\tau \right)ds \\
&& \qquad = \lim_{t \to t_0+k+0} \int_t^{t_0+k+1} \frac{r(s)}{K(s)} \textnormal{exp}\left( -\int_s^{t_0+k+1} r(\tau)d\tau - \int_{t_0+k+1}^{t+1} r(\tau)d\tau\right)ds \\
&& \qquad \qquad + \int_{t_0+k+1}^{t_0+k+1} \frac{r(s)}{K(s)} \textnormal{exp}\left( -\int_s^{t_0+k+1} r(\tau)d\tau \right)ds \\
&& \qquad = \lim_{t \to t_0+k+0} \textnormal{exp}\left( -\int_{t_0+k+1}^{t+1} r(\tau)d\tau\right) \\
&& \qquad \qquad \cdot \int_t^{t_0+k+1} \frac{r(s)}{K(s)} \textnormal{exp}\left( -\int_s^{t_0+k+1} r(\tau)d\tau \right)ds \\
&& \qquad = \textnormal{exp}\left( -\int_{t_0+k+1}^{t_0+k+1} r(\tau)d\tau\right) \cdot \int_{t_0+k}^{t_0+k+1} \frac{r(s)}{K(s)} \textnormal{exp}\left( -\int_s^{t_0+k+1} r(\tau)d\tau \right)ds = B
\end{eqnarray*} 
Thus we have  
\begin{equation*}
x_*\left((t_0+k)^+\right) = \frac{(1-E)A-1}{AB}.
\end{equation*}  
On the other hand
\begin{eqnarray*}
&&x_*\left((t_0+k)^-\right) = \lim_{t \to t_0+k-0} x_*(t) \\
&& \qquad = \lim_{t \to t_0+k-0} [(1-E)A-1] \left[ A \int_t^{t+1} \frac{r(s)}{K(s)} \textnormal{exp}\left( -\int_s^{t+1} r(\tau)d\tau \right)ds\right]^{-1} \\
&& \qquad = [(1-E)A-1] \left[ A  \lim_{t \to t_0+k-0} \int_t^{t+1} \frac{r(s)}{K(s)} \textnormal{exp}\left( -\int_s^{t+1} r(\tau)d\tau \right)ds\right]^{-1}.
\end{eqnarray*}     			            
Here
\begin{eqnarray*}
&& \lim_{t \to t_0+k-0} \int_t^{t+1} \frac{r(s)}{K(s)} \textnormal{exp}\left( -\int_s^{t+1} r(\tau)d\tau \right)ds \\
&& \qquad = \lim_{t \to t_0+k-0} \left[ \int_t^{t_0+k} \frac{r(s)}{K(s)} \textnormal{exp}\left( -\int_s^{t+1} r(\tau)d\tau \right)ds \right. \\
&& \qquad \qquad \left. + \int_{t_0+k}^{t+1} \frac{r(s)}{K(s)} \textnormal{exp}\left( -\int_s^{t+1} r(\tau)d\tau \right)ds \right] \\
&& \qquad = \lim_{t \to t_0+k-0} \int_t^{t_0+k} \frac{r(s)}{K(s)} \textnormal{exp}\left( -\int_s^{t+1} r(\tau)d\tau \right)ds \\
&& \qquad \qquad +  \lim_{t \to t_0+k-0} \int_{t_0+k}^{t+1} \frac{r(s)}{K(s)} \textnormal{exp}\left( -\int_s^{t+1} r(\tau)d\tau \right)ds \\
&& \qquad = \lim_{t \to t_0+k+0} \int_t^{t_0+k} \frac{r(s)}{K(s)} \textnormal{exp}\left( -\int_s^{t_0+k} r(\tau)d\tau - \int_{t_0+k}^{t+1} r(\tau)d\tau\right)ds \\
&& \qquad \qquad + \int_{t_0+k}^{t_0+k+1} \frac{r(s)}{K(s)} \textnormal{exp}\left( -\int_s^{t_0+k+1} r(\tau)d\tau \right)ds \\
&& \qquad = \lim_{t \to t_0+k+0} \textnormal{exp}\left( -\int_{t_0+k}^{t+1} r(\tau)d\tau\right) \\
&& \qquad \qquad \cdot \int_t^{t_0+k} \frac{r(s)}{K(s)} \textnormal{exp}\left( -\int_s^{t_0+k} r(\tau)d\tau \right)ds+B \\
&& \qquad = \textnormal{exp}\left( -\int_{t_0+k}^{t_0+k+1} r(\tau)d\tau\right) \cdot \int_{t_0+k}^{t_0+k} \frac{r(s)}{K(s)} \textnormal{exp}\left( -\int_s^{t_0+k} r(\tau)d\tau \right)ds + B \\
&& \qquad = B.
\end{eqnarray*}  
Therefore
\begin{equation*}
x_*\left((t_0+k)^-\right) = \frac{(1-E)A-1}{AB}.
\end{equation*}  
Thus 
\begin{equation*}
x_*\left((t_0+k)^+\right) = x_*\left((t_0+k)^-\right).\\
\end{equation*}  

In this paper is provided a new representation of periodic solution to the impulsive Logistic equation (a) and (b).

%
%
%%%  2 Main Results  %%%%%%%
%
%

\section{Main Results}
We consider the impulsive equation \eqref{eq1} where $K(t), r(t) \in PC(R, R)$ are positive and 
periodic functions with the period 1 and
\begin{eqnarray*}
PC(R, R) & = & \{ \sigma: R \to R : \sigma(t) ~ \textnormal{is a piecewise continuous function which} \\
&& ~~ \textnormal{has discontinuities} ~ t_k ~ \textnormal{where} ~ \sigma(t_k-0) ~ \textnormal{and} ~ \sigma(t_k+0) ~ \textnormal{exist}\\
&&  ~~ \textnormal{and} ~\sigma(t_k-0) = \sigma(t_k) ~ \textnormal{holds} \}.
\end{eqnarray*} 
We assume that
\begin{eqnarray*}
&& 0<t_0<t_0+1<t_0+2< \cdots < t_0+k<\cdots, \\
&& \lim_{k \to \infty} \tau_k = +\infty, ~ k=1, 2, \cdots
\end{eqnarray*} 
and $1-E>0$ as in \cite{zha1}.

%%%%%%     Theorem 1     %%%%%
\begin{thm} \label{thr1}
\textnormal{\cite{zha1}} The solution $x(t)$ with $x(t_0) = x_0$ to \eqref{eq1} is provided in $t \in [t_0+k, t_0+k+1)$ as follows:
\begin{eqnarray}
x(t) & = & \left[ \frac{1}{x_0[(1-E)A]^k} \textnormal{exp} \left(-\int_{t_0+k}^t r(\tau)d\tau \right) + AB \sum_{j=1}^k \frac{1}{[(1-E)A]^j} \right. \nonumber \\
&&  \left. \vphantom{\sum_{j=1}^k} \times \textnormal{exp} \left(-\int_{t_0+k}^t r(\tau)d\tau \right) + \int_{t_0+k}^t \frac{r(s)}{K(s)} \textnormal{exp} \left(-\int_{s}^t r(\tau)d\tau \right)ds \right]^{-1}, \quad \label{eq3} \\
&&  ~~ k=0, 1, 2, \cdots. \nonumber 
\end{eqnarray} 
Here $A$ and $B$ are given as in \eqref{eq2}. 
\end{thm}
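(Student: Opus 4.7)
The plan is to transform the Bernoulli equation (a) into a linear first-order ODE via the standard substitution $y(t) = 1/x(t)$. Differentiating gives $y' = -x'/x^2$, so (a) becomes the linear equation
$$y'(t) + r(t) y(t) = \frac{r(t)}{K(t)}, \qquad t \neq \tau_k,$$
and the impulse law $x(\tau_k^+) = (1-E)x(\tau_k^-)$ translates into $y(\tau_k^+) = y(\tau_k^-)/(1-E)$. The initial condition becomes $y(t_0) = 1/x_0$.

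Next I would solve this linear ODE on each inter-impulse interval $[t_0+k, t_0+k+1)$ by the integrating factor $\exp\!\left(\int_{t_0+k}^t r(\tau)\,d\tau\right)$. Setting $y_k := y((t_0+k)^+)$ for $k \ge 1$ and $y_0 := 1/x_0$, variation of parameters yields
$$y(t) = y_k \exp\!\left(-\int_{t_0+k}^t r(\tau)d\tau\right) + \int_{t_0+k}^t \frac{r(s)}{K(s)} \exp\!\left(-\int_s^t r(\tau)d\tau\right) ds.$$
Evaluating at $t = (t_0+k+1)^-$ and invoking the periodicity of $r$ and $K$, the first exponential equals $1/A$ and the second integral equals $B$ (as recorded in \eqref{eq2}), so $y((t_0+k+1)^-) = y_k/A + B$. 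Applying the impulse then gives the recurrence
$$y_{k+1} = \frac{y_k}{(1-E)A} + \frac{B}{1-E}.$$

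Setting $\alpha := 1/[(1-E)A]$ and observing that $B/(1-E) = AB\,\alpha$, the recurrence is $y_{k+1} = \alpha y_k + AB\alpha$, whose solution (by induction or telescoping) is
$$y_k = \alpha^k y_0 + AB\sum_{j=1}^k \alpha^j = \frac{1}{x_0\,[(1-E)A]^k} + AB\sum_{j=1}^k \frac{1}{[(1-E)A]^j}.$$
Substituting this into the expression for $y(t)$ on $[t_0+k, t_0+k+1)$, factoring the common exponential out of the first two terms, and inverting via $x(t) = 1/y(t)$ produces precisely \eqref{eq3}.

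The only real point of care is Step $3$: verifying that the constants $A$ and $B$ genuinely do not depend on $k$, so that the recurrence has constant coefficients. This follows from the 1-periodicity of $r(t)$ and $K(t)$ under the change of variable $\tau \mapsto \tau - k$, exactly as displayed in \eqref{eq2}. Everything else is routine bookkeeping of exponentials of integrals over consecutive unit intervals.
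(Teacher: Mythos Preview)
Your argument is correct: the Bernoulli substitution $y=1/x$ linearises (a), the impulse becomes $y_{k+1}=(y_k/A+B)/(1-E)$, and solving this constant-coefficient recurrence gives exactly the bracketed expression in \eqref{eq3}. The paper itself does not prove Theorem~\ref{thr1} but simply cites \cite{zha1}; your computation is the standard derivation and almost certainly coincides with the one in that reference.
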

The proof of this theorem is the same as in \cite{zha1}.

%
%%%%%%   Theorem 2     %%%%%%
%

\begin{thm} \label{thr2}
If $(1-E)A>1$, then \eqref{eq1} has the unique positive periodic solution with the period 1 provided as follows:
\begin{eqnarray}
x_*(t) & = & [(1-E)A-1] \left[ AB \textnormal{exp} \left(-\int_{t_0+k}^t r(\tau)d\tau \right) \right. \nonumber \\
&& \left. +  [(1-E)A-1] \int_{t_0+k}^t \frac{r(s)}{K(s)} \textnormal{exp} \left(-\int_{s}^t r(\tau)d\tau \right)ds \right]^{-1}  \label{eq4} \\
&&  ~~ t \in [t_0+k, t_0+k+1), ~ k=0, 1, 2, \cdots. \nonumber 
\end{eqnarray} 
\end{thm}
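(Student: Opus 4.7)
The plan is to pin down the unique initial value $x_0=x(t_0^{+})$ that makes the solution from Theorem~\ref{thr1} one-periodic, and then substitute that value back into \eqref{eq3} and simplify. I set $\alpha:=(1-E)A$ throughout.

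First I would exploit 1-periodicity of $r,K$ and translation-invariance of the impulse pattern to conclude that any 1-periodic solution $x_*$ must satisfy $x_*((t_0+k)^{+})=x_0$ for every $k\ge 0$. Evaluating the formula \eqref{eq3} at $t=t_0+k$ makes both exponential factors collapse to $1$ and kills the last integral, so the periodicity condition reduces to
\begin{equation*}
\frac{1}{x_0}\;=\;\frac{1}{x_0\alpha^{k}}+AB\sum_{j=1}^{k}\alpha^{-j},\qquad k=1,2,\ldots
\end{equation*}
The hypothesis $\alpha>1$ gives $0<\alpha^{-1}<1$, so the elementary identity $\sum_{j=1}^{k}\alpha^{-j}=(1-\alpha^{-k})/(\alpha-1)$ applies; the $k$-dependent pieces cancel and the unique positive root is
\begin{equation*}
x_0\;=\;\frac{(1-E)A-1}{AB}.
\end{equation*}

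Second I would substitute this value back into \eqref{eq3}. The first two terms inside the bracket share the factor $\exp\bigl(-\int_{t_0+k}^{t}r(\tau)\,d\tau\bigr)$, and the very same geometric identity collapses their numerical coefficient to $AB/(\alpha-1)$. Multiplying numerator and denominator by $\alpha-1$ then reproduces \eqref{eq4} verbatim. Two final checks close the argument. (i) \emph{Periodicity and uniqueness}: by Theorem~\ref{thr1} the IVP is uniquely solvable, so a 1-periodic solution is completely determined by its right-limit at $t_0$; the equation for $x_0$ had a single positive root, hence uniqueness, while 1-periodicity of the resulting \eqref{eq4} is immediate from 1-periodicity of $r,K$ and the invariance of \eqref{eq3} under $t\mapsto t+1$. (ii) \emph{Impulse condition}: evaluating \eqref{eq4} on $[t_0+k-1,t_0+k)$ at $t\to(t_0+k)^{-}$, using $\int_{t_0+k-1}^{t_0+k}r=\ln A$ and the integral identity for $B$ in \eqref{eq2}, one obtains $x_*((t_0+k)^{-})=(\alpha-1)/(B\alpha)=x_0/(1-E)$, so $\Delta x_*=-Ex_*$ holds exactly, remedying the defect in \cite{zha1} exhibited in the introduction.

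The main obstacle is purely algebraic bookkeeping: carrying the factor $\alpha-1$ through the geometric-series collapse in both directions, and verifying the impulse jump, which is where the new formula \eqref{eq4} genuinely differs from the earlier representation. The hypothesis $(1-E)A>1$ is used precisely where it is needed, namely to make $\alpha^{-1}<1$ (so the geometric-sum manipulation is consistent across all $k$) and to guarantee positivity of $x_0$ and $x_*(t)$.
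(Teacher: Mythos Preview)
Your proposal is correct and follows essentially the same route as the paper: determine the unique initial value $x_0=\dfrac{(1-E)A-1}{AB}$ compatible with $1$-periodicity, substitute it into \eqref{eq3}, collapse the geometric sum to obtain \eqref{eq4}, and then verify periodicity and the impulse jump directly. The only cosmetic difference is that you pin down $x_0$ by evaluating \eqref{eq3} at the impulse instants $t=(t_0+k)^{+}$ (where the exponentials and the last integral trivialize), whereas the paper writes out $x(t)$ and $x(t+1)$ on the first interval and equates them; both routes yield the same equation for $x_0$ and the remaining algebra is identical.
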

\begin{proof}
From \eqref{eq3}, the solution $x(t)$ to \eqref{eq1} for $t \in [t_0, t_0+1)$ is provided as 
\begin{equation*}
x(t) = \left[ \frac{1}{x_0} \textnormal{exp} \left(-\int_{t_0}^t r(\tau)d\tau \right) + \int_{t_0}^t \frac{r(s)}{K(s)} \textnormal{exp} \left(-\int_{s}^t r(\tau)d\tau \right)ds \right]^{-1}.
\end{equation*}
Therefore we have
\begin{eqnarray*}
x(t+1) & = & \left[ \frac{1}{x_0(1-E)A} \textnormal{exp} \left(-\int_{t_0+1}^{t+t_0+1} r(\tau)d\tau \right) \right. \\
&& + \frac{B}{1-E} \textnormal{exp} \left(-\int_{t_0+1}^{t+t_0+1} r(\tau)d\tau \right) \\
&& + \left. \int_{t_0+1}^{t+t_0+1} \frac{r(s)}{K(s)} \textnormal{exp} \left(-\int_{s}^{t+t_0+1} r(\tau)d\tau \right)ds \right]^{-1}.
\end{eqnarray*}
We find the $x_0$ such that $x(t) = x(t+1)$:
\begin{equation*}
x_0 = \frac{(1-E)A-1}{AB}.
\end{equation*}
Substitute $x_0$ into \eqref{eq3}, we have
\begin{eqnarray*}
x_*(t) & = & \left[ \frac{AB}{[(1-E)A-1][(1-E)A]^k} \textnormal{exp} \left(-\int_{t_0+k}^{t} r(\tau)d\tau \right) \right. \\
&& \qquad + AB\sum_{j=1}^k \frac{1}{[(1-E)A]^j} \textnormal{exp} \left(-\int_{t_0+k}^{t} r(\tau)d\tau \right)  \\
&& \qquad + \left. \int_{t_0+k}^{t} \frac{r(s)}{K(s)} \textnormal{exp} \left(-\int_{s}^{t} r(\tau)d\tau \right)ds \right]^{-1} \\
& = & \left[ \frac{1+\sum_{j=0}^{k-1} [(1-E)A]^j [(1-E)A-1]}{[(1-E)A-1][(1-E)A]^k} AB \textnormal{exp} \left(-\int_{t_0+k}^{t} r(\tau)d\tau \right) \right. \\
&& \qquad + \left. \int_{t_0+k}^{t} \frac{r(s)}{K(s)} \textnormal{exp} \left(-\int_{s}^{t} r(\tau)d\tau \right)ds \right]^{-1} \\
& = & [(1-E)A-1] \left[ AB \textnormal{exp} \left(-\int_{t_0+k}^{t} r(\tau)d\tau \right) \right. \\
&& \qquad + \left.  [(1-E)A-1] \int_{t_0+k}^{t} \frac{r(s)}{K(s)} \textnormal{exp} \left(-\int_{s}^{t} r(\tau)d\tau \right)ds \right]^{-1}. 
\end{eqnarray*}
Thus we have \eqref{eq4}. 

Now we show that \eqref{eq4} is a periodic solution to \eqref{eq1}. 
It is evident that \eqref{eq4} satisfies (a) and (b) from Theorem \ref{thr1} but we will show that 
\eqref{eq4} satisfies the impulse condition (b) in detail.  

When $t \in [t_0, t_0+1)$, we have
\begin{eqnarray*}
x_*(t) & = & [(1-E)A-1] \left[AB \textnormal{exp} \left(-\int_{t_0}^{t} r(\tau)d\tau \right) \right. \\
&& +  \left. [(1-E)A-1] \int_{t_0}^{t} \frac{r(s)}{K(s)} \textnormal{exp} \left(-\int_{s}^{t} r(\tau)d\tau \right)ds \right]^{-1}.
\end{eqnarray*}
Thus we have
\begin{eqnarray*}
x\left( (t_0+1)^- \right) & = & \lim_{t \to t_0+1-0} x_*(t) \\
& = & [(1-E)A-1] \left[AB \lim_{t \to t_0+1-0} \textnormal{exp} \left(-\int_{t_0}^{t} r(\tau)d\tau \right) \right. \\
&& +  \left. [(1-E)A-1] \lim_{t \to t_0+1-0} \int_{t_0}^{t} \frac{r(s)}{K(s)} \textnormal{exp} \left(-\int_{s}^{t} r(\tau)d\tau \right)ds \right]^{-1}.
\end{eqnarray*}
Here
\begin{equation*}
 \lim_{t \to t_0+1-0} \textnormal{exp} \left(-\int_{t_0}^{t} r(\tau)d\tau \right) = \textnormal{exp} \left(-\int_{t_0}^{t_0+1} r(\tau)d\tau \right) = A^{-1}
\end{equation*}
and
\begin{eqnarray*}
&& \lim_{t \to t_0+1-0} \int_{t_0}^{t} \frac{r(s)}{K(s)} \textnormal{exp} \left(-\int_{s}^{t} r(\tau)d\tau \right)ds \\
&& \qquad \qquad = \int_{t_0}^{t_0+1} \frac{r(s)}{K(s)} \textnormal{exp} \left(-\int_{s}^{t_0+1} r(\tau)d\tau \right)ds = B.
\end{eqnarray*}
Therefore we have
\begin{eqnarray*}
x\left( (t_0+1)^- \right) & = & [(1-E)A-1] \left\{ ABA^{-1} + [(1-E)A-1]B \right\}^{-1} \\
& = & \frac{(1-E)A-1}{AB(1-E)}.
\end{eqnarray*}
When $t \in [t_0+1, t_0+2)$, we have
\begin{eqnarray*}
x_*(t) & = & [(1-E)A-1] \left[AB \textnormal{exp} \left(-\int_{t_0+1}^{t} r(\tau)d\tau \right) \right. \\
&& +  \left. [(1-E)A-1] \int_{t_0+1}^{t} \frac{r(s)}{K(s)} \textnormal{exp} \left(-\int_{s}^{t} r(\tau)d\tau \right)ds \right]^{-1}.
\end{eqnarray*}
Calculate $x\left( (t_0+1)^+ \right) = \lim_{t \to t_0+1+0} x_*(t)$. If we consider 
\begin{equation*}
 \lim_{t \to t_0+1+0} \textnormal{exp} \left(-\int_{t_0+1}^{t} r(\tau)d\tau \right) = \textnormal{exp} \left(-\int_{t_0+1}^{t_0+1} r(\tau)d\tau \right) = 1
\end{equation*}
and
\begin{eqnarray*}
&& \lim_{t \to t_0+1+0} \int_{t_0+1}^{t} \frac{r(s)}{K(s)} \textnormal{exp} \left(-\int_{s}^{t} r(\tau)d\tau \right)ds \\
&& \qquad \qquad = \int_{t_0+1}^{t_0+1} \frac{r(s)}{K(s)} \textnormal{exp} \left(-\int_{s}^{t_0+1} r(\tau)d\tau \right)ds = 0,
\end{eqnarray*}
then we have
\begin{eqnarray*}
x_*\left( (t_0+1)^+ \right) & = & [(1-E)A-1] \left\{ AB + [(1-E)A-1] \cdot 0 \right\}^{-1} \\
& = & \frac{(1-E)A-1}{AB}.
\end{eqnarray*}
Therefor we have
\begin{eqnarray*}
&& x_*\left( (t_0+1)^+ \right) -  x_*\left( (t_0+1)^- \right) = \frac{(1-E)A-1}{AB} - \frac{(1-E)A-1}{AB(1-E)} \\
&& \qquad \qquad = \frac{(1-E)A-1}{AB} \left( 1 - \frac{1}{1-E} \right) = -E \frac{(1-E)A-1}{AB(1-E)} \\
&& \qquad \qquad = -E \cdot x_*\left( (t_0+1)^- \right).
\end{eqnarray*}

Next we show that \eqref{eq4} has periodicity. When $t \in [t_0+k, t_0+k+1)$, we have $t+1 \in [t_0+k+1, t_0+k+2)$,
\begin{eqnarray*}
&& x_*(t) = [(1-E)A-1] \left[AB \textnormal{exp} \left(-\int_{t_0+k}^{t} r(\tau)d\tau \right) \right. \\
&& \qquad \qquad +  \left. [(1-E)A-1] \int_{t_0+k}^{t} \frac{r(s)}{K(s)} \textnormal{exp} \left(-\int_{s}^{t} r(\tau)d\tau \right)ds \right]^{-1}, \\
&& x_*(t+1) = [(1-E)A-1] \left[AB \textnormal{exp} \left(-\int_{t_0+k+1}^{t+1} r(\tau)d\tau \right) \right. \\
&& \qquad \qquad +  \left. [(1-E)A-1] \int_{t_0+k+1}^{t+1} \frac{r(s)}{K(s)} \textnormal{exp} \left(-\int_{s}^{t+1} r(\tau)d\tau \right)ds \right]^{-1}.
\end{eqnarray*}
From the periodicity of $K(s), r(s)$, we have
\begin{equation*}
 \textnormal{exp} \left(-\int_{t_0+k}^{t} r(\tau)d\tau \right) = \textnormal{exp} \left(-\int_{t_0+k+1}^{t+1} r(\tau)d\tau \right)
\end{equation*}
and thus we have
\begin{equation*}
\int_{t_0+k}^{t} \frac{r(s)}{K(s)} \textnormal{exp} \left(-\int_{s}^{t} r(\tau)d\tau \right)ds = \int_{t_0+k+1}^{t+1} \frac{r(s)}{K(s)} \textnormal{exp} \left(-\int_{s}^{t+1} r(\tau)d\tau \right)ds.
\end{equation*}
Therefore we have $x_*(t) = x_*(t+1)$.
\end{proof}

 \end{document}